\newtheorem{theorem}{Theorem}[section]
\newtheorem{lemma}[theorem]{Lemma}
\date{\today}
\title[]{ Construction of Discontinuous Enrichment Functions for Enriched FEM's for Interface Elliptic Problems in 1D}
\author{}
\date{\today}							
\author{So-Hsiang Chou$^\dagger$ and C. Attanayake$^\ddagger$}
\thanks{$\dagger$
 Department of Mathematics and Statistics, Bowling Green
State University, Bowling Green, OH, 43403-0221. {{\tt email:chou@bgsu.edu}};\,$\ddagger$ Department of Mathematics,
Miami University
Middletown, OH 45042. {{\tt
e-mail:attanac@muohio.edu}} }
\begin{document}
\maketitle
\begin{abstract}
We introduce an enriched unfitted finite element method to solve 1D elliptic interface problems with discontinuous solutions, including those having implicit or Robin-type
interface jump conditions. We present a novel approach to construct a one-parameter family of discontinuous enrichment functions by finding an optimal order interpolating function to the discontinuous solutions. In the literature, an enrichment function is usually given beforehand, not related to the construction step of an interpolation operator. Furthermore, we recover the well-known continuous enrichment function when the parameter is set to zero. To prove its efficiency, the enriched linear and quadratic
elements are applied to a multi-layer wall model for drug-eluting stents in which zero-flux jump conditions and implicit concentration interface conditions are both present.
\end{abstract}
{\bf \small Key Words.}\keywords{\small{ enriched finite element, elliptic interface, implicit interface jump condition, Robin interface jump condition, linear and quadratic finite elements.}
\maketitle
\section{Introduction}
Consider the interface two-point boundary value problem
\begin{equation}
\begin{cases}
-(\beta(x)p'(x))'+w(x)p(x)= f(x), &  x\in I=(a,b),  \label{eqn:First}\\
p(a)= p(b) = 0,
\end{cases}
\end{equation}
where $w(x)\geq 0$, and  $0<\beta\in C[a,\alpha]\cup C[\alpha,b]$ is discontinuous across the interface $\alpha$
with the jump conditions on $p$ and its flux $q:=\beta p'$:
\begin{align}\label{implicit}
[p]_{\alpha} &=\lambda F(q^+,q^-,[p']_\alpha),\quad \lambda\in \mathbb R,\, F:[c,d]\to \mathbb R, \\
\label{implicit2}
[\beta p']_\alpha &= g,  \quad g\in\mathbb R
\end{align}
where the jump quantity
 \[    [s]_\alpha:=s(\alpha^+)-s(\alpha^-), \quad s^{\pm}:=s(\alpha^\pm):=\lim_{\epsilon\to 0^+} s(\alpha\pm \epsilon).\]
The primary variable $p$ may stand for the pressure, temperature, or concentration in a medium with certain physical properties
and the derived quantity $q:=-\beta p'$ is the corresponding Darcy velocity, heat flux, or concentration flux, which is equally important.
 The piecewise continuous $\beta$ reflects a nonuniform  material or medium property {\it{(we do not require $\beta$ to be piecewise constant}}).  The function
$w(x)$ reflects the surroundings of the medium. The case of $\lambda=0$ is widely studied, while the case of $\lambda>0$ gives rise to a more difficult situation. For example, the case of rightward concentration flow \cite{Wang, Zhang1, Zhang2} imposes
\begin{equation}
\begin{cases}\label{jmp1}
[p]_\alpha&=\lambda (\beta p)'(\alpha^-)\\
[\beta p']_\alpha &= 0,
\end{cases}
\end{equation}
which generates an implicit condition since the left-sided derivative is unknown. Implicit interface conditions abound in higher dimensional applications \cite{Ammari, Hahn, Kim, Kruit1}.  For definiteness, we will study a class of efficient enriched methods for problem  \eqref{eqn:First} under the jump conditions \eqref{jmp1}, but our methods apply to problem \eqref{eqn:First} subject to the general conditions \eqref{implicit}-\eqref{implicit2} with a well-posed weak formulation. After a simple calculation, it is easy to
see that \eqref{jmp1} is equivalent to
\begin{equation}
\begin{cases}\label{jmpGood}
[p]_\alpha&=\gamma[p']_\alpha, \quad \gamma=-\frac{\lambda\beta^-\beta^+}{[\beta]_\alpha}\\
[\beta p']_\alpha &= 0,
\end{cases}
\end{equation}
which is indeed of the type \eqref{implicit}-\eqref{implicit2}.

Numerical methods for the interface problem (\ref{eqn:First}) under \eqref{jmp1} generally use meshes that are either fitted or unfitted with the interface. A method allowing unfitted meshes would be very efficient when one has to follow a moving interface \cite{he2013immersed} in a temporal problem.  For the unfitted methods, there are available geometrically unfitted
finite element methods typified in \cite{Bordas} and the reference therein, the immersed finite and finite difference methods \cite{Chou, guo2019group, guo2018nonconforming, jo2019recent, LI, LI:2006, Li2003}, the stable generalize finite element methods (SGFEM) \cite{babuvska2004generalized, Babuska1, Babuska2, Deng, zhang2021generalized}, among others. In an unfitted method, the mesh is made up of interface elements where the interface intersects elements and non-interface elements where the interface is absent. On a non-interface element, one uses standard local shape functions, whereas on an interface element one uses specialized local shape functions reflecting the jump conditions. For an enriched method, the standard finite elements are enriched
with some enrichment functions that reflect the presence of the interfaces. It was originally designed to handle crack problems \cite{Belyt, Fries, Moes}, but for recent years efforts have been made to generalize it to fluid problems, see \cite{Wang} and the references therein.

The construction of the local shape basis of an immersed finite element or finite difference method uses information on discontinuous $\beta$ while an enriched method does not. Thus an enriched method does not require the discontinuous diffusion coefficient to be piecewise
constant, which is an advantage. On the other hand, it makes the choice of the enrichment function less intuitive and the error analysis arguably harder. The purpose of this
paper is to propose an approach to constructing the enrichment function from optimal order error analysis. The general idea is as follows. In the error analysis, we use the principle that
says, roughly, the error in the finite element solution $p_h$ should be bounded by the approximation error in the finite element space $V_h$:
\begin{equation}\label{motive}
     ||p-p_h||\leq C \inf_{\chi\in V_h}||p-\chi||+\text{ consistency error}.
     \end{equation}
Suppose that the consistent error is of optimal order, then the optimal order analysis is completed if we can demonstrate an optimal order approximate piecewise polynomial from $V_h$. In an enriched finite element method, $V_h$ takes the form of
\[  V_{h}:=S_h\oplus \psi S_h=\{p_{h}+q_{h}\psi:\,p_{h},q_{h}\in S_{h}\},\]
where $S_h$ is a standard finite element space (e.g., $\mathbb P_k$-conforming, $k\geq 1$), and the function $\psi$ is an enrichment function to reflect the jump conditions. In the literature,
$\psi$ is usually given beforehand and then one tries to find the optimal order interpolating polynomial to prove convergence. Our new approach is to connect the construction of $\psi$ and the interpolating
polynomial together and finds $\psi$ through error analysis. In this way, we also have a unified theory for constructing enrichment functions for continuous and discontinuous finite element solutions.

Let's mention how we were motivated to come up with the new approach. In view of \eqref{motive}, for standard continuous conforming finite element methods there are familiar interpolating polynomials that do the job \cite{Ern1}. For problem \eqref{eqn:First} with a continuous solution ($[p]_\alpha=0$), Deng \cite{Deng} proved the convergence of finite element solutions in {\it all} $\mathbb P_i-$conforming spaces($i\geq 1$) enriched by the same well-known hat function \cite{babuvska2004generalized, Babuska1, Babuska2, Deng} (cf. Eq. \eqref{eqn:lin} below). The crux of the proof was again the existence of a simple interpolating polynomial.
However, for an enriched \emph{immersed or unfitted} method approximating a discontinuous solution, it is impossible to find the same type of interpolation operator due to the finite jump of $[p]_\alpha$ (cf. \cite{Choun}). On the other hand, we in \cite{ATTChou2021}, unaware of \cite{Deng}, used a different interpolation operator to prove optimal order convergence. The approach was motivated by an additional presence of an interface deviation. In this paper we generalize the analysis of \cite{ATTChou2021}; modifying that operator ($I^c_h$ of \eqref{defIhp} below) to find the desired interpolating polynomial. The new family of the enrichment functions is a result of this analysis, not given beforehand. However, the formula of the enrichment function (cf. \eqref{eqn:lin1} below) is simple and intuitive, and can be used without knowing the detail of the analysis.

The organization of this paper is as follows. In Section 2, we state the weak formulation for the implicit interface condition problem, define enrichment functions and spaces, and put their role in perspective in Remark 2.1. In Section 3, we carry out the error analysis and show how the construction of the enrichment function is related to it. Optimal order convergence in the broken $H^1$ and $L^2$-norms is given in Theorem \ref{thm3.2}. In addition, the second-order accuracy of $p_h$ at the nodes is proven
  in Theorem \ref{2order}. In Section 4, we provide numerical examples of a porous wall model to demonstrate the effectiveness
of the present enriched finite element method and confirm the convergence theory. Furthermore, following the viewpoint of the SGFEM \cite{Babuska1, Babuska2, Deng}, we compare the condition numbers of our (discontinuous solution) method with those in the continuous solution case \cite{ATTChou2021}, and numerically show that they are comparable for the same mesh sizes. Both linear and quadratic enriched elements are tested. Finally, in Section 5 we give some concluding remarks and discuss possible extensions of the present approach to multiple dimensions.
\section{Enrichment Functions and Spaces}
\subsection{Weak Formulation}
Let $I^-=(a,\alpha)$ and $I^+=(\alpha,b)$, and define
\[  H_{\alpha,0}^1(I)=\{v\in L^2(I): v\in H^1(I^-)\cap H^1(I^+), v(a)=v(b)=0\}.\]
We use conventional Sobolev norm notation. For example,  $|u|_{1,J}$  denotes the usual $H^1$-seminorm for $u\in H^1(J)$, and $||u||^2_{i,I^-\cup I^+}=||u||^2_{i,I^-}+||u||^2_{i, I^+}, i=1,2$ for $u\in  H_\alpha^2(I)$, where
\[
 H_\alpha^2(I)= H^2(I^-)\cap H^2(I^+).\]
 The space $H_{\alpha,0}^1(I)$ is endowed with the $||\cdot||_{1,I^-\cup I^+}$ norm, and $H_\alpha^2(I)$ with the $||\cdot||_{2,I^-\cup I^+}$ norm.
With this in mind, the weak formulation of the problem \eqref{eqn:First} under (\ref{jmp1}) is: Given $f \in L^{2}(I)$, find $p\in H_{\alpha,0}^{1}(I)$ such that
\begin{equation}\label{ExactWeak}
a(p,q) = (f,q)\quad \forall q\in H_{\alpha,0}^{1}(I),
\end{equation}
where
\begin{align*}
a(p,q) =& \int_{a}^{b}\beta(x)p'(x)q'(x)dx+\int_{a}^{b}w(x)p(x)q(x)dx+\frac{[p]_\alpha[q]_\alpha}{\lambda},\\
(f,q) =& \int_{a}^{b}f(x)q(x)dx.
\end{align*}
 The above weak formulation can be easily derived by integration-by-parts and by \eqref{jmp1}. Since $\lambda>0$, the bilinear form $a(\cdot,\cdot)$ is coercive and is bounded due to Poincar$\acute{e}$ inequality. By the Lax Milgram theorem, a unique solution $p$ exists. Throughout the paper, we assume that the functions $\beta$, $f$, and $w$ are such that
 the solution $p\in H^2_\alpha(I)$.

 \subsection{Enrichment Functions}
 We now introduce an approximation space for the solution $p$. Let $a=x_{0}<x_{1}<\ldots <x_{k}<x_{k+1}<\ldots<x_{n}=b$ be a partition of $I$ and the interface point
$\alpha\in(x_{k},x_{k+1})$ for some $k$. As usual, the meshsize $h:=\max_i h_i, h_i=x_{i+1}-x_i,i=0,\ldots, n-1$. Define the enrichment function
\begin{equation}
\psi(x) :=
\begin{cases}
0   &  x\in [a,x_{k}]\\
m_1(x-x_k)&  x\in [x_{k},\alpha)  \label{eqn:lin1}\\
m_2(x-x_{k+1}) &  x\in (\alpha,x_{k+1}]  \\
0   &  x\in [x_{k+1},b]
\end{cases}
\end{equation}
where
\begin{equation}\label{slopes}
m_1=\dfrac{\alpha-x_{k+1}}{x_{k+1}-x_{k}},\quad   m_2=\dfrac{(\alpha-x_k-\gamma)(\alpha-x_{k+1})}{(x_{k+1}-x_k)(\alpha-x_{k+1}-\gamma)},\quad \gamma=-\frac{\lambda\beta^-\beta^+}{[\beta]_\alpha}
\end{equation}
{\bf Remark 2.1.}
\begin{itemize}
\item Note that  $\psi$ satisfies the following conditions:
\begin{equation}\label{psi}
  \psi(x_{k}) = \psi(x_{k+1})=0, \qquad
 \quad  [ \psi']_\alpha\neq 0,\qquad [\psi]_\alpha=\begin{cases}0\qquad \quad\, \quad\gamma=0\\
\text{ nonzero}\quad \gamma\neq 0.\end{cases}
\end{equation}
\item We can view $\psi$ as parameterized by $\gamma$, and for different problems we would have to define $\gamma$. In the present application case,
$\gamma=-\lambda \beta^-\beta^+/[\beta]_\alpha$. Our theory depends on $\gamma$, not its specific definition.
\item If we set $\gamma=0\, ([p]_\alpha=0)$, we recover the familiar continuous enriched function \cite{Babuska1, Babuska2, Deng, ATTChou2021} for the continuous case in which $[\psi']_\alpha=1$ (cf. Eq.\eqref{eqn:lin}).\\ In other words, the continuous case is the limiting case of the discontinuous ones.
\item Notice that the slopes are uniformly bounded, i.e., there exists a constant $C>0$ such that
\begin{equation}\label{uniform}
|m_|+|m_2|\leq C \quad \forall x_k,x_{k+1},\alpha, 0\leq h\leq 1.
\end{equation}
\item The main gist of this paper is to obtain $\psi$ as a natural consequence of our error analysis. The definition of $m_2$ is a result of zeroing out of infinite coefficient of $[p']_\alpha$ in the error analysis (cf. Eq. \eqref{J5}).
\end{itemize}

 Let us describe the enriched space associated with $\psi$. Let $\bar I=\cup_0^{n-1}I_{i},I_i=[x_i,x_{i+1}]$ and let $S_{h}$ be the conforming linear finite element space
\begin{align}\label{P1}
S_{h} &= \{
v_{h}\in C(\bar I) : v_{h}|_{I_i} \in \mathbb P_{1}, i=0,\ldots,n-1, v_h(a)=v_h(b)=0
\}\\
\notag
&=\text{span}\{\phi_i,i=1,2,\ldots,n-1\}
\end{align}
where $\phi_i$'s are the Lagrange nodal basis (hat) functions.
We denote the usual $\mathbb P_1$-interpolation operator by $\pi_h:C(\bar I)\to S_h$,
\[     \pi_h g=\sum_{i=1}^{n-1}g(x_i)\phi_i,\]
and define the enriched finite element space
\begin{align}\label{enriched}
\overline{S}_{h}&=S_h\oplus \psi S_h=\{p_{h}+q_{h}\psi:\,p_{h},q_{h}\in S_{h}\}\\
&=\text{span}\{\phi_1,\phi_2,\ldots,\phi_{n-1},\phi_k\psi,\phi_{k+1}\psi\}.\notag
\end{align}
Consider the enriched finite element method for problem (\ref{eqn:First}): Find $p_{h}\in \overline{S}_{h}\subset H^1_{\alpha,0}$  such that
\begin{equation}\label{FEMweak}
a(p_{h},q_{h}) = (f,q_{h})\quad \forall q_{h}\in \overline{S}_{h}.
\end{equation}

\subsection{Optimal Order Interpolating Polynomial $I_hp$}
It is essential for the enriched space to have good approximation properties for the functions in $H_\alpha^2(I)$ that satisfy the jump conditions (\ref{jmpGood}).
 For $p\in  H_\alpha^2(I)$, let $p_i,i=1,2$ be the extensions
of $p$ restricted to $I^-$ and $I^+$ to $H^2(I)$, respectively \cite{Hansbo}. Thus $p'_{2}-p'_{1}$ is in $H^1(I)\subset C(\bar I)$ due to the Sobolev inequality, and as a result
the usual $\mathbb P_1$--interpolation operator $\pi_h(p_2^\prime-p_1^\prime)\in S_h$ is well defined.
To exhibit approximation properties of $\bar S_h$ for functions in $H^2_\alpha(I)$ that satisfy \eqref{jmpGood}, we first define the interpolation operator $I^c_h: H^2_\alpha(I)\to \overline{S}_{h}$
\begin{align}\label{defIhp}
I^c_{h}p =& \pi_h p+\pi_h(p'_{2}-p'_{1})\psi.
\end{align}
In particular
\begin{equation}\label{formula}
I^c_{h}p =\dfrac{p_1(x_k)(x_{k+1}-x)+p_2(x_{k+1})(x-x_k)}{x_{k+1}-x_k}+\pi_h(p'_{2}-p'_{1})\psi(x)\quad \forall x\in [x_k,x_{k+1}].
\end{equation}
 This interpolation operator has been used successfully in \cite{ATTChou2021}, but our experience showed that it is not capable of handling
the discontinuous case $[p]_\alpha\neq 0$. To emphasize we use a superscript $c$ to indicate continuity.
Now we modify it with an added correction term to accommodate the case of $[p]_\alpha\neq 0$:
Define the interpolation operator $I_h: H_\alpha^2(I)\to \overline{S}_{h}$
\begin{align}\label{defIhd}
I_{h}p =& I^c_{h}p+\delta\psi,
\end{align}
where
\begin{align}\label{delta}
\delta&=-\frac{h_k^{-1}[p]_\alpha}{m_1}=-\frac{[p]_\alpha}{\alpha-x_{k+1}}.
\end{align}

 The $\delta$-term is motivated by the error analysis in Lemma \ref{lemma3trm} below. Its presence is to kill the jump term in $p$ across $\alpha$ that may go to infinity as $h$ goes to zero (See Eq. \eqref{cancel}).

Let $\chi_i,i=1,2$ be the characteristic functions of $I^-$ and $I^+$, respectively, and let
\begin{align*}
V_h&:=\{v=v_{h,1}\chi_1+v_{h,2}\chi_2; \,\,v_{h,i}\in S_h,i=1,2\}.
\end{align*}
Note that functions in the above space may be discontinuous at $\alpha$.
Define the auxiliary  interpolations $\bar{I}_{h}: H_\alpha^2(I)\to V_h$,
\begin{align*}
\bar{I}_{h}p&=\pi_h p_1\chi_1+\pi_h p_2\chi_2.
\end{align*}
To derive a bound for the term $|p-I_{h}p|_{1,I^{-}\cup I^{+}}$ we split the error as follows:
\begin{equation} \label{MainError}
|p-I_{h}p|_{1,I^{-}\cup I^{+}} \leq |p-\bar{I}_{h}p|_{1,I^{-}\cup I^{+}} + |\bar{I}_{h}p-I_{h}p|_{1,I^{-}\cup I^{+}}.
\end{equation}
From the classical approximation theory
\begin{equation}\label{ClasicError}
|p-\bar{I}_{h}p|_{1,I^{-}\cup I^{+}}  \leq Ch\|p\|_{2,I^{-}\cup I^{+}}.
\end{equation}
Thus it suffices to estimate the second term on the right side of (\ref{MainError}), which is done in the following two lemmas.
We mention in passing that all the constants in the estimates should be independent of the interface position as well.
This fact is important if one wants to use the method for moving interface problems.

\section{Construction of Enrichment Functions in relation to Error Analysis}
\begin{lemma}\label{lemma3trm}
There exists a  positive constant $C$ independent of $h$ and $\alpha$ such that
\begin{equation}\label{estimate1}
|\bar{I}_{h}p-I_{h}p|_{1,I^{-}}  \leq Ch\|p\|_{2,I^{-}\cup I^{+}})\quad \forall p\in {H}_\alpha^{2}(I).
\end{equation}

\end{lemma}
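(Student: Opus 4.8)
The plan is to exploit the fact that $\bar I_h p-I_h p$ is supported on the single interface element. First I would observe that on every element $[x_i,x_{i+1}]$ lying strictly to the left of the interface element (i.e. $i\le k-1$) we have $\psi\equiv 0$ and, since $p=p_1$ on $\overline{I^-}$, also $\pi_h p=\pi_h p_1$; hence $\bar I_h p-I_h p\equiv 0$ there. Consequently the seminorm $|\bar I_h p-I_h p|_{1,I^-}$ collapses to an integral of $v':=(\bar I_h p-I_h p)'$ over the sub-element $[x_k,\alpha)$, whose length is at most $h_k:=x_{k+1}-x_k\le h$.

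On $[x_k,\alpha)$ every ingredient is an explicit polynomial ($\pi_h p_1,\pi_h p,\pi_h(p_2'-p_1')$ are linear and $\psi=m_1(x-x_k)$), so I would simply differentiate. The leading piece $(\pi_h p_1)'-(\pi_h p)'$ equals $(p_1(x_{k+1})-p_2(x_{k+1}))/h_k$; Taylor-expanding $p_1,p_2$ about $\alpha$ rewrites this as $-[p]_\alpha/h_k-[p']_\alpha(x_{k+1}-\alpha)/h_k$ plus a remainder bounded by $Ch_k^{1/2}\|p\|_{2,I^-\cup I^+}$. The term $-[p]_\alpha/h_k$ is the only one that blows up as $h\to 0$, and it is cancelled \emph{exactly} by $\delta\psi'=\delta m_1=-[p]_\alpha/h_k$; this is precisely why $\delta$ was defined by \eqref{delta}, and it is the crux of the argument.

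After the cancellation the derivative reads $v'=m_1[p']_\alpha-[\pi_h(p_2'-p_1')\psi]'+O(h^{1/2}\|p\|_{2,I^-\cup I^+})$, where I used $(\alpha-x_{k+1})/h_k=m_1$. Writing $D:=p_2'-p_1'$ and using $D(\alpha)=[p']_\alpha$ together with $\psi'=m_1$, I would regroup this as $m_1\bigl(D(\alpha)-\pi_h D\bigr)-\psi(\pi_h D)'$, i.e. as genuine interpolation errors of $D$. Each factor is then controlled by standard one-dimensional estimates, e.g. $\|D-\pi_h D\|_{L^\infty([x_k,x_{k+1}])}\le Ch_k^{1/2}\|p\|_{2,I^-\cup I^+}$ and $|(\pi_h D)'|\le Ch_k^{-1/2}\|p\|_{2,I^-\cup I^+}$, together with $|\psi|\le h_k$ and $|m_1|\le 1$ from \eqref{slopes}, yielding $\|v'\|_{L^\infty([x_k,\alpha))}\le Ch^{1/2}\|p\|_{2,I^-\cup I^+}$.

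Finally I would integrate over the short interval: $|\bar I_h p-I_h p|_{1,I^-}\le \|v'\|_{L^\infty}\,(\alpha-x_k)^{1/2}\le Ch^{1/2}\|p\|_{2,I^-\cup I^+}\cdot h^{1/2}=Ch\|p\|_{2,I^-\cup I^+}$, with $C$ independent of $h$ and $\alpha$ because $|m_1|\le 1$ and no factor of $(\alpha-x_k)^{-1}$ or mesh-ratio survives the cancellation. The main obstacle is the bookkeeping in the second and third steps: isolating the singular $[p]_\alpha/h_k$ contribution, checking that $\delta\psi'$ removes it completely, and then verifying that the surviving $[p']_\alpha$-terms recombine into $O(h^{1/2})$ interpolation errors rather than leaving an $O(1)$ residue — the $h^{1/2}$ margin there is exactly what the short interval factor $(\alpha-x_k)^{1/2}$ upgrades to the optimal order $h$.
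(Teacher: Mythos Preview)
Your proposal is correct and follows essentially the same route as the paper: reduce to the interface subinterval $[x_k,\alpha)$, Taylor-expand $(\pi_h p_1-\pi_h p)'$ to isolate the singular term $-[p]_\alpha/h_k$, cancel it exactly with $\delta\psi'$, bound the surviving pieces by $Ch^{1/2}\|p\|_{2,I^-\cup I^+}$, and integrate over the short interval. The only cosmetic difference is that the paper expands the remaining $(\pi_h(p_2'-p_1')\psi)'$ contribution into four explicit terms $I_1,\dots,I_4$ and estimates each by Cauchy--Schwarz, whereas you regroup it as $m_1\bigl(D(\alpha)-\pi_h D\bigr)-\psi(\pi_h D)'$ and invoke standard one-dimensional interpolation bounds; the two organizations are equivalent and yield the same $O(h^{1/2})$ pointwise bound with constants independent of $\alpha$.
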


\begin{proof} It suffices to show the detailed analysis on the interface element $[x_k,x_{k+1}]$.
For the interval $[x_{k},\alpha]$, from the definition (\ref{defIhp}) of $I^c_{h}p$ and the addition and substraction of the same quantity yield
\begin{align}\label{deferrLem2}
&\left(\bar{I}_{h}p - I^c_{h}p\right)' = \left(\bar{I}_{h}p -\pi_{h}p \right)'  - \left( \pi_h(p'_{2}-p'_{1})(x)\psi(x)\right)' \notag \\
&=J_1-J_2,
\end{align}
where
\begin{align}
J_1&:= \left(\bar{I}_{h}p -\pi_{h}p \right)'  +  \frac{(x_{k+1}-\alpha)(p'_{2}(\alpha) - p'_{1}(\alpha))}{x_{k+1}-x_{k}}\notag \\
J_2&:= \frac{(x_{k+1}-\alpha)(p'_{2}(\alpha) - p'_{1}(\alpha))}{x_{k+1}-x_{k}}  +  \left( \pi_h(p'_{2}-p'_{1})(x)\psi(x)\right)'.\notag
\end{align}

Since $\bar{I}_{h}p=\pi_{h}p_{1}\chi_1$ on $[x_{k},\alpha]$, the first term in $J_1$
\begin{align*}
\left(\bar{I}_{h}p -\pi_{h}p \right)'(x)
& =  \frac{p_{1}(x_{k+1}) -  p_{2}(x_{k+1})}{x_{k+1}-x_{k}}\\
 &= \frac{p_{1}(x_{k+1}) - p_{1}(\alpha) + p_{2}(\alpha) -  p_{2}(x_{k+1})}{x_{k+1}-x_{k}}-\frac{[p]_\alpha}{x_{k+1}-x_{k}},
\end{align*}
and combining this with the second term in $J_1$ leads to
\begin{equation}\label{J3}
J_1=J_3-\frac{[p]_\alpha}{x_{k+1}-x_{k}},
\end{equation}
where using the Taylor's expansion with integral remainder form
\begin{align} \label{term1Lem2}
|J_3|&=\bigg|
\frac{p_{1}(x_{k+1}) - p_{1}(\alpha) + p_{2}(\alpha) -  p_{2}(x_{k+1})}{x_{k+1}-x_{k}} + \frac{(x_{k+1}-\alpha)(p'_{2}(\alpha) - p'_{1}(\alpha))}{x_{k+1}-x_{k}}
\bigg|
\notag \\
&=\bigg|
\frac{p_{1}(x_{k+1}) - p_{1}(\alpha) - p'_{1}(\alpha)(x_{k+1}-\alpha)}{x_{k+1}-x_{k}} -\frac{ p_{2}(x_{k+1})-p_2(\alpha)-p'_2(\alpha)((x_{k+1}-\alpha)}{x_{k+1}-x_{k}}
\bigg|\\
&=\frac{1}{x_{k+1}-x_{k}}\bigg|
\left(
\int_{\alpha}^{x_{k+1}}p_{1}''(t)(x_{k+1}-t)dt - \int_{\alpha}^{x_{k+1}}p_{2}''(t)(x_{k+1}-t)dt
\right)
\bigg|
\notag \\
&\leq
\frac{x_{k+1}-\alpha}{x_{k+1}-x_{k}}\left(
\int_{\alpha}^{x_{k+1}}|p_{1}''(t)|dt +\int_{\alpha}^{x_{k+1}}|p_{2}''(t)|dt
\right)
\notag \\
& \leq 2\frac{x_{k+1}-\alpha}{x_{k+1} - x_{k}} h^{1/2} \|p''\|_{0,I^{-}\cup I^{+}} \notag \\
\notag&\leq Ch^{1/2} \|p''\|_{0,I^{-}\cup I^{+}},
\end{align}
where the constant $C=2$, independent of $h$ and $\alpha$.
Note that $J_1$ is the difference between a small quantity $J_3$ and a large quantity $[p]_\alpha/(x_{k+1}-x_{k})$ as $h$ goes to zero.
The latter is controlled by the $\psi'$ terms in \eqref{delta} through the $\delta-$parameter in the following relation
\begin{align}\label{cancel}
   \left(\bar{I}_{h}p - I_{h}p\right)'&=\left(\bar{I}_{h}p - I^c_{h}p\right)'-\delta\psi'\\\notag
   &=J_1-J_2-\delta\psi'\\\notag
   &=J_3-J_2-\frac{[p]_\alpha}{x_{k+1}-x_k}-\delta m_1\\
\notag   &=J_3-J_2
\end{align}
by the way we defined $\delta$ in \eqref{delta}. Next we show that $J_2$ is the difference between a small quantity and a large term we can control.

To avoid clustering of expressions, let $\Delta:=p_2-p_1$ so that $\Delta'=p_2'-p_1'$ and $\Delta''=p_2''-p_1''$. We also
denote $\Delta'(x_k)$ by $\Delta'_k$, and  $ \Delta'(x_{k+1})$ by $\Delta'_{k+1}$. Below, we use these notations when necessary.
 First note that with $\psi=m_1(x-x_k)$ and $\Delta'(\alpha)=[p']_\alpha$
\begin{align}\notag
&\left( \pi_h(p'_{2}- p'_{1})(x)\psi\right)' \\
\notag
&=h_k^{-1}
(\Delta'_{k+1}- \Delta'_k)\psi
+h_k^{-1}\big[(\Delta'_{k+1}(x-x_k)-
\Delta'_{k}(x-x_{k+1})
\big]\psi'
\\& =m_1h_k^{-1}\big[
2\Delta'_{k+1}(x-x_{k}) - \Delta'_{k}(x-x_{k}) - \Delta'_{k}(x-x_{k+1})
\big]\notag
\end{align}
and hence
\begin{align*}
J_2&:= \frac{(x_{k+1}-\alpha)(p_{2}'(\alpha) - p_{1}'(\alpha))}{x_{k+1}-x_{k}}+\left( \pi_h(\Delta')(x)\psi(x)\right)'\\
&=h_k^{-1}(x_{k+1}-\alpha)[p']_\alpha\\
&\quad +m_1h_k^{-1}\Big[
2\Delta'_{k+1}(x-x_{k}) - \Delta'_{k}(x-x_{k}) - \Delta'_{k}(x-x_{k+1})\Big]\\
&=h_k^{-1}(x_{k}-\alpha)[p']_\alpha+[p']_\alpha\\
&\quad +m_1h_k^{-1}\Big[
2\Delta'_{k+1}(x-x_{k}) - \Delta'_{k}(x-x_{k}) - \Delta'_{k}(x-x_{k+1})\Big]\\
&=h_k^{-1}(x_{k}-\alpha)[p']_\alpha+[p']_\alpha+
m_1h_k^{-1}\Big[\sum_{i=1}^4I_i+[p']_\alpha h_k\Big],\\
&=h_k^{-1}(x_{k}-\alpha)[p']_\alpha+(m_1+1)[p']_\alpha+
m_1h_k^{-1}\sum_{i=1}^4I_i\\
&:=J_4=m_1h_k^{-1}\sum_{i=1}^4I_i  \quad (\text{ by } \eqref{slopes})
\end{align*}
where
\begin{align*}
I_1&=(\Delta'(x_{k+1})-\Delta'(x_{k}))(x-x_{k+1}),\\
I_2&=(\Delta'(x_{k+1})-\Delta'(x_{k}))(x-x_k),\\
I_3&=(\Delta'(x_k)-\Delta'(\alpha))(x_{k+1}-x_k),\\
I_4&=(\Delta'(x_{k+1})-\Delta'(x_{k}))(x_{k+1}-x_{k}).
\end{align*}
Each of the $m_1h_k^{-1}I_i$ terms in $J_4$ can be estimated similarly using the Cauchy-Schwarz inequality, e.g.,
\begin{align*}
  |m_1h_k^{-1}I_1|&\leq |m_1|\frac{x_{k+1}-x}{x_{k+1}-x_{k}}\int_{x_{k}}^{x_{k+1}} |(p_2'-p_1')'|(y)dy\\
 &\leq |m_1|\frac{x_{k+1}-x}{x_{k+1}-x_{k}}\left(\int_{x_{k}}^{x_{k+1}} |(p_2'-p_1')'|^2(y)dy\right)^{1/2}(x_{k+1}-x_k)^{1/2}\\
 &\leq |m_1|h^{1/2}||p''||_{0, I^-\cup I^+}\\
 &\leq Ch^{1/2}||p''||_{0, I^-\cup I^+}, \qquad   (|m_1|\leq 1)
 \end{align*}
 where $C$ is a constant independent of $h$ and $\alpha$.
Combining these estimates we see that
\begin{align}\label{J4}
J_2& =J_4
\end{align}
with
\begin{align}
|J_4|
= |m_1h_k^{-1}\sum_{i=1}^4I_i|\leq Ch^{1/2}||p''||_{0, I^-\cup I^+}.
\end{align}

From \eqref{cancel} and using \eqref{J3}, \eqref{J4}, and \eqref{delta}.
\begin{align}\label{cancel1}
   \left(\bar{I}_{h}p - I_{h}p\right)'&=\left(\bar{I}_{h}p - I^c_{h}p\right)'-\delta\psi'\\
  \notag         &=J_3-J_4.
      \end{align}
Gathering all the local estimates and integrating, we have
\begin{equation}\label{estimate1}
|\bar{I}_{h}p-I_{h}p|_{1,I^{-}}  \leq Ch\|p\|_{2,I^{-}\cup I^{+}}\quad  \forall p\in {H}_\alpha^{2}(I)
\end{equation}
where $C$ is independent of $h$ and $\alpha$.
\end{proof}

\begin{lemma}\label{lemma3right1}
There exist a positive constant $C$ independent of $h$ and $\alpha$ such that
\[
|\bar{I}_{h}p-I_{h}p|_{1,I^{+}}  \leq C h\|p\|_{2,I^{-}\cup I^{+}}\quad \forall p\in {H}_\alpha^{2}(I) \text{ satisfying } \eqref{jmpGood}.
\]
\end{lemma}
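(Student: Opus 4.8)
The plan is to follow the template of Lemma~\ref{lemma3trm}, now working on the right half of the interface element. Since $\psi$ is supported on $[x_k,x_{k+1}]$ and there $\pi_h p$ agrees with $\pi_h p_2$ at the node $x_{k+1}\in I^+$, one checks that $\bar I_h p-I_h p$ vanishes on $I^+$ away from $[x_k,x_{k+1}]$; hence only the subinterval $[\alpha,x_{k+1}]$ contributes to $|\bar I_h p-I_h p|_{1,I^+}$. On $[\alpha,x_{k+1}]$ we have $\bar I_h p=\pi_h p_2$, $\psi=m_2(x-x_{k+1})$ and $\psi'=m_2$, and with $\Delta:=p_2-p_1$ I would split $(\bar I_h p-I_h p)'=(\bar I_h p-\pi_h p)'-(\pi_h(\Delta')\psi)'-\delta\psi'$. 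A direct slope computation gives $(\bar I_h p-\pi_h p)'=h_k^{-1}(p_1(x_k)-p_2(x_k))=-h_k^{-1}\Delta(x_k)$, and, by the same manipulation used on $I^-$, $(\pi_h(\Delta')\psi)'=m_2h_k^{-1}\big[\Delta'_{k+1}(x-x_{k+1})+\Delta'_{k+1}(x-x_k)-2\Delta'_{k}(x-x_{k+1})\big]$.

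The decisive step is to isolate, in each of these three pieces, the part that is a fixed multiple of the jump $[p']_\alpha=\Delta'(\alpha)$ carrying a factor $h_k^{-1}$; these are the terms that threaten to blow up as $h\to0$. For $(\bar I_h p-\pi_h p)'$ I would Taylor-expand about $\alpha$, writing $\Delta(x_k)=[p]_\alpha+[p']_\alpha(x_k-\alpha)+R_k$ with $R_k=\int_{x_k}^{\alpha}(t-x_k)\Delta''(t)\,dt$, and then use the hypothesis \eqref{jmpGood} in the form $[p]_\alpha=\gamma[p']_\alpha$ together with $\delta=-[p]_\alpha/(\alpha-x_{k+1})$ from \eqref{delta}; this converts both $-h_k^{-1}\Delta(x_k)$ and $-\delta m_2$ into fixed multiples of $[p']_\alpha$ (plus the remainder $-h_k^{-1}R_k$). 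In the $\psi$-term I would write $\Delta'_k=[p']_\alpha+(\Delta'_k-[p']_\alpha)$ and likewise for $\Delta'_{k+1}$; substituting the leading value $[p']_\alpha$ collapses the bracket to exactly $[p']_\alpha h_k$, leaving the clean contribution $-m_2[p']_\alpha$.

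Collecting the coefficients of $[p']_\alpha$ from the three pieces, I expect the combined ``infinite'' coefficient
\begin{equation}\label{J5}
-h_k^{-1}(\gamma+x_k-\alpha)-m_2+\frac{\gamma\,m_2}{\alpha-x_{k+1}},
\end{equation}
and the whole point of the construction is that it must vanish: setting the expression in \eqref{J5} to zero and solving for $m_2$ reproduces \emph{exactly} the slope $m_2=\dfrac{(\alpha-x_k-\gamma)(\alpha-x_{k+1})}{(x_{k+1}-x_k)(\alpha-x_{k+1}-\gamma)}$ prescribed in \eqref{slopes}. This cancellation of the $h_k^{-1}[p']_\alpha$ term is the main obstacle and the heart of the lemma; it is what forces the definition of $m_2$, playing on $I^+$ the role that $\delta$ plays on $I^-$ in \eqref{cancel}. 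I would also record here that $m_2$ is uniformly bounded, as asserted in \eqref{uniform}, which keeps all constants independent of $\alpha$.

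Once the dangerous term is removed by the choice of $m_2$, only small remainders survive: the Taylor remainder $-h_k^{-1}R_k$, and the first differences $\Delta'_k-[p']_\alpha$ and $\Delta'_{k+1}-[p']_\alpha$, each an integral of $\Delta''$ over a subinterval of length $\le h_k$, multiplied by $m_2h_k^{-1}$ and by factors of size $h_k$. Each is bounded pointwise by $Ch^{1/2}\|p''\|_{0,I^-\cup I^+}$ via Cauchy--Schwarz, exactly as in the estimates of $J_3$ and the $I_i$ in Lemma~\ref{lemma3trm}, using $|m_2|\le C$ from \eqref{uniform}. Squaring, integrating over $[\alpha,x_{k+1}]$ (whose length is $\le h_k$) and taking the square root then yields $|\bar I_h p-I_h p|_{1,I^+}\le Ch\|p\|_{2,I^-\cup I^+}$ with $C$ independent of $h$ and $\alpha$, which is the claim.
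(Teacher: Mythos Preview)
Your proposal is correct and follows essentially the same strategy as the paper's proof: both reduce to the interval $[\alpha,x_{k+1}]$, split $(\bar I_h p-I_h p)'$ into the three pieces $(\bar I_h p-\pi_h p)'-(\pi_h(\Delta')\psi)'-\delta\psi'$, extract the leading $[p']_\alpha$-contributions via Taylor expansion, and verify that their combined coefficient vanishes precisely by the definition of $m_2$ in \eqref{slopes}, leaving only $O(h^{1/2})$ remainders. The only cosmetic difference is bookkeeping: the paper inserts an auxiliary add--subtract of $h_k^{-1}(x_k-\alpha)[p']_\alpha$ to form intermediate quantities $\tilde J_1,\tilde J_2,\tilde J_3,\tilde J_4$ before collecting the coefficient $\tilde J_5$, whereas you Taylor-expand $\Delta(x_k)$ about $\alpha$ directly and invoke $[p]_\alpha=\gamma[p']_\alpha$ a step earlier; the resulting coefficient you display in \eqref{J5} is algebraically equivalent to the paper's $\tilde J_5$ and yields the same $m_2$.
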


\begin{proof}
For the interval $[\alpha,x_{k+1}]$, from the definition (\ref{defIhp}) of $I^c_{h}p$ and adding and subtracting of the same quantity, $h_k^{-1}(x_{k+1}-\alpha)[p']_\alpha$, yield
\begin{align}\label{deferrLem2}
&\left(\bar{I}_{h}p - I^c_{h}p\right)' = \left(\bar{I}_{h}p -\pi_{h}p \right)'  - \left( \pi_h(p'_{2}-p'_{1})(x)\psi(x)\right)' \notag \\
&= \left(\left(\bar{I}_{h}p -\pi_{h}p \right)'  +  \frac{(x_{k}-\alpha)(p'_{2}(\alpha) - p'_{1}(\alpha))}{x_{k+1}-x_{k}} \right) \notag \\
&\quad  -\left( \frac{(x_{k}-\alpha)(p'_{2}(\alpha) - p'_{1}(\alpha))}{x_{k+1}-x_{k}} +  \left( \pi_h(p'_{2}-p'_{1})(x)\psi(x)\right)' \right)\notag\\
&:=\tilde J_1-\tilde J_2.
\end{align}

Noting that $\bar{I}_{h}p=\pi_{h}p_{2}\chi_2$ for $x\in [\alpha,x_{k+1}]$, we see that
\begin{align*}
\left(\bar{I}_{h}p -\pi_{h}p \right)'(x)
& =  \frac{p_{1}(x_{k}) -  p_{2}(x_{k})}{x_{k+1}-x_{k}}\\
 &= \frac{p_{1}(x_{k}) - p_{1}(\alpha) + p_{2}(\alpha) -  p_{2}(x_{k})}{x_{k+1}-x_{k}}-\frac{[p]_\alpha}{x_{k+1}-x_{k}},
\end{align*}
and hence \begin{equation}\label{J1T}
\tilde J_1=\tilde J_3-\frac{[p]_\alpha}{x_{k+1}-x_{k}}
\end{equation}
where
\begin{align}\notag
|\tilde J_3|&=\bigg|
\frac{p_{1}(x_{k}) - p_{1}(\alpha) + p_{2}(\alpha) -  p_{2}(x_{k})}{x_{k+1}-x_{k}} + \frac{(x_{k}-\alpha)(p'_{2}(\alpha) - p'_{1}(\alpha))}{x_{k+1}-x_{k}}
\bigg|
\notag \\
\notag&=\bigg|
\frac{p_{1}(x_{k}) - p_{1}(\alpha) - p'_{1}(\alpha)(x_{k}-\alpha)}{x_{k+1}-x_{k}} -\frac{ p_{2}(x_{k})-p_2(\alpha)-p'_2(\alpha)((x_{k}-\alpha)}{x_{k+1}-x_{k}}
\bigg|\\
&=\frac{1}{x_{k+1}-x_{k}}\bigg|
\left(
\int_{\alpha}^{x_{k}}p_{1}''(t)(x_{k}-t)dt - \int_{\alpha}^{x_{k}}p_{2}''(t)(x_{k}-t)dt
\right)
\bigg|
\notag \\
&\leq
\frac{x_{k}-\alpha}{x_{k+1}-x_{k}}\left(
\int_{\alpha}^{x_{k}}|p_{1}''(t)|dt + \int_{\alpha}^{x_{k}}|p_{2}''(t)|dt
\right)
\notag \\
& \leq 2\frac{x_{k}-\alpha}{x_{k+1} - x_{k}} h^{1/2} \|p''\|_{0,I^{-}\cup I^{+}} \notag \\
&\leq 2 h^{1/2} \|p''\|_{0,I^{-}\cup I^{+}}.
\end{align}

Having decomposed $\tilde J_1$ as the difference of a small term and a large term plus a finite term, we do the same for $\tilde J_2$.
First, with $\psi=m_2(x-x_{k+1})$ we have
\begin{align}\notag
&\left( \pi_h(p'_{2}- p'_{1})(x)\psi\right)' \\
\notag
&=h_k^{-1}
(\Delta'_{k+1}- \Delta'_{k})\psi
+h_k^{-1}\big[(\Delta'_{k+1}(x-x_k)-
\Delta'_{k}(x-x_{k+1})
\big]\psi'
\\& =m_2h_k^{-1}\big[
(\Delta'_{k+1}-\Delta'_k)(x-x_{k+1}) +\Delta'_{k+1}(x-x_{k})-\Delta'_{k}(x-x_{k+1})
\big]\notag\\
&=m_2h_k^{-1}\left(\sum_{i=1}^3I_i+\Delta'(\alpha)(x_{k+1}-x_{k})\right),
\end{align}
where
\begin{align*}
I_1&=(\Delta'(x_{k+1})-\Delta'(x_{k}))(x-x_{k+1}),\\
I_2&=(\Delta'(x_{k+1})-\Delta'(x_{k}))(x-x_k),\\
I_3&=(\Delta'(x_k)-\Delta'(\alpha))(x_{k+1}-x_k),
\end{align*}
and hence
\begin{align}\label{J21}
&\tilde J_{2}:=\frac{(x_{k}-\alpha)(p_{2}'(\alpha) - p_{1}'(\alpha))}{x_{k+1}-x_{k}}+\left( \pi_h(p'_{2}- p'_{1})(x)\psi(x)\right)'\\
&=h_k^{-1}((x_{k}-\alpha)[p']_\alpha+m_2h_k[p']_\alpha)+m_2h_k^{-1}\left(\sum_{i=1}^3I_i\right),
\end{align}
where the last term can be estimated as before.
Thus,
\begin{align}\label{J2T}
\tilde J_{2}= \tilde J_4+h_k^{-1}\left((x_{k}-\alpha)[p']_\alpha+m_2h_k[p']_\alpha\right)
\end{align}
with due to \eqref{uniform}
\begin{align*}
|\tilde J_4|=|m_2h_k^{-1}\left(\sum_{i=1}^3I_i\right)|\leq Ch^{1/2}\|p''\|_{0,I^{-}\cup I^{+}}.
\end{align*}

Now
\[
   \left(\bar{I}_{h}p - I_{h}p\right)'=\left(\bar{I}_{h}p - I^c_{h}p\right)'-\delta\psi'\]
   will be estimated as follows.
From \eqref{deferrLem2}, \eqref{J1T}, and \eqref{jmpGood}  we have
   \begin{align}\label{item1}
   \left(\bar{I}_{h}p - I_{h}p\right)'&=\left(\bar{I}_{h}p - I^c_{h}p\right)'-\delta\psi'\\
   \left(\bar{I}_{h}p - I^c_{h}p\right)'&=\tilde J_3-\tilde J_4-h_k^{-1}(x_{k}-\alpha)[p']_\alpha-h_k^{-1}[p]_\alpha-m_2[p']_\alpha-\frac{h_k^{-1}[p]_\alpha}{m_1}m_2\\
\notag   &=\tilde J_3-\tilde J_4-\tilde J_5
    \end{align}
    where
    \begin{equation}\label{J5}
    \tilde J_5:=\left(h_k^{-1}(x_k-\alpha)+h_k^{-1}\gamma+m_2+m_2h_k^{-1}\gamma/m_1\right) [p']_\alpha.
    \end{equation}
Thus
\begin{align}\label{conclusion}
\left(\bar{I}_{h}p - I_{h}p\right)'&=\mathcal{O}(h^{1/2}),
\end{align}
since $\tilde J_5=0$ by the way we defined $m_2$.

Gathering all the above local estimates and integrating, we conclude that there exists a constant $C>0$ independent of $h$ and $\alpha$ such that
\[
|\bar{I}_{h}p-I_{h}p|_{1,I^{+}}  \leq Ch\|p\|_{2,I^{-}\cup I^{+}}\quad \forall p\in {H}_\alpha^{2}(I).
\]
\end{proof}

Using Lemmas \ref{lemma3trm} and \ref{lemma3right1}, we obtain
\begin{theorem}\label{thm3.1}
There exists a constant $C>0$ independent of $h$ such that
\begin{equation}\label{interpolation err}
|p- I_{h}p|_{1,I}  \leq Ch\|p\|_{2,I^{-}\cup I^{+}}\quad \forall p\in {H}_\alpha^{2}(I) \text{ satisfying } \eqref{jmpGood}.
\end{equation}
\end{theorem}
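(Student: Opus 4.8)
The plan is to read the seminorm $|\cdot|_{1,I}$ in the statement as the broken seminorm $|\cdot|_{1,I^-\cup I^+}$, since $I_hp$ may be discontinuous across $\alpha$, and then to assemble the bound from three ingredients already in hand: the triangle-inequality split \eqref{MainError}, the classical approximation estimate \eqref{ClasicError} for the auxiliary interpolant $\bar{I}_h p$, and the two one-sided bounds of Lemmas \ref{lemma3trm} and \ref{lemma3right1}.

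First I would apply \eqref{MainError} to write
\[
|p-I_h p|_{1,I^-\cup I^+} \leq |p-\bar{I}_h p|_{1,I^-\cup I^+} + |\bar{I}_h p - I_h p|_{1,I^-\cup I^+},
\]
and bound the first summand by $Ch\|p\|_{2,I^-\cup I^+}$ using \eqref{ClasicError}. For the second summand I would expand the broken seminorm as
\[
|\bar{I}_h p - I_h p|_{1,I^-\cup I^+}^2 = |\bar{I}_h p - I_h p|_{1,I^-}^2 + |\bar{I}_h p - I_h p|_{1,I^+}^2,
\]
and bound the two terms by Lemma \ref{lemma3trm} and Lemma \ref{lemma3right1} respectively; each contributes $Ch\|p\|_{2,I^-\cup I^+}$, and taking the square root preserves the order.

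A point I would make explicit is that $\bar{I}_h p - I_h p$ is supported on the single interface element $[x_k,x_{k+1}]$. On any element $[x_i,x_{i+1}]$ with $i<k$ one has $p=p_1$ and $\psi\equiv 0$, so $I_h p = \pi_h p = \pi_h p_1 = \bar{I}_h p$ there; the case $i>k$ is symmetric with $p=p_2$. Hence the two lemmas, which treat the halves $[x_k,\alpha]$ and $[\alpha,x_{k+1}]$ of the interface element, already account for the whole of the second summand. This is also where the hypothesis that $p$ satisfy \eqref{jmpGood} enters---exclusively through Lemma \ref{lemma3right1}, whose vanishing term $\tilde J_5$ in \eqref{J5} relies on the definition of $m_2$ through $\gamma$ in \eqref{slopes}.

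The main obstacle is conceptual rather than computational, and it lives inside the lemmas rather than in the present assembly: the slopes $m_1,m_2$ and the correction $\delta$ must be chosen so that the two terms which would otherwise diverge like $h^{-1}$ as $h\to 0$---namely $[p]_\alpha/(x_{k+1}-x_k)$ coming from the jump in $p$, and the $[p']_\alpha$ coefficient collected in $\tilde J_5$---are cancelled exactly, not merely bounded. That cancellation, carried out via \eqref{cancel} and the definition \eqref{slopes} of $m_2$, is precisely what turns a naively divergent interpolation error into the optimal $\mathcal O(h)$ rate, and the theorem simply records its consequence after summing the local estimates over the elements and integrating.
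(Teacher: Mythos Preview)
Your proposal is correct and follows exactly the route the paper takes: the theorem is stated immediately after the sentence ``Using Lemmas \ref{lemma3trm} and \ref{lemma3right1}, we obtain,'' so the intended argument is precisely the triangle-inequality split \eqref{MainError} together with \eqref{ClasicError} and the two lemmas, which you have spelled out. Your additional remarks---that $\bar I_h p - I_h p$ is supported on the single interface element and that the hypothesis \eqref{jmpGood} enters only through the vanishing of $\tilde J_5$ in Lemma \ref{lemma3right1}---are accurate clarifications of what the paper leaves implicit.
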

Since our enriched finite element method is conforming, the convergence analysis is routine except for the step of checking the constant in the estimate to
be independent of maximum meshsize $h$ and the interface position $\alpha$.
\begin{theorem}\label{thm3.2}
Let $p$ be the exact solution  and $p_{h}$ be the approximate solution of (\ref{ExactWeak})  and (\ref{FEMweak}), respectively. Then
there exists a constant $C>0$ such that
\begin{equation}\label{main}
\|p-p_{h}\|_{0,I^-\cup I^+} + h\|p-p_{h}\|_{1,I^-\cup I^+} \leq Ch^{2}\|p\|_{2,I^{-}\cup I^{+}}.
\end{equation}
The constant $C$ does not depend on independent of $h$ and $\alpha$ but depends on the ratio  $\rho:=\frac{\beta^{*}}{\beta_{*}}$ with $\beta^{*} = \sup_{x\in [a,b]}\beta(x)$ and
$\beta_{*} = \inf_{x\in [a,b]}\beta(x)$.
\end{theorem}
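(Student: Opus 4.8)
The plan is to combine a standard Céa (quasi-optimality) argument for the broken $H^1$ error with an Aubin–Nitsche duality argument for the $L^2$ error, feeding the interpolation estimate of Theorem \ref{thm3.1} into both. Since $\psi\in H^1_{\alpha,0}(I)$ and $S_h\subset H^1_{\alpha,0}(I)$, the scheme is conforming, $\overline S_h\subset H^1_{\alpha,0}(I)$, so \eqref{ExactWeak} and \eqref{FEMweak} share the same bilinear form and Galerkin orthogonality $a(p-p_h,\chi)=0$ holds for all $\chi\in\overline S_h$. I would first record that on $H^1_{\alpha,0}(I)$ the form $a$ is coercive and bounded with respect to the broken norm $\|\cdot\|_{1,I^-\cup I^+}$: coercivity follows from $a(v,v)\ge\beta_*|v|_{1,I^-\cup I^+}^2$ together with a piecewise Poincaré inequality (each of $I^-,I^+$ carries one homogeneous Dirichlet end), and boundedness follows once the penalty term $\tfrac1\lambda[u]_\alpha[v]_\alpha$ is controlled by the one-dimensional trace bound $|[v]_\alpha|\le C\|v\|_{1,I^-\cup I^+}$. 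The resulting constants $\alpha_0$ (coercivity) and $M$ (continuity) depend only on $\beta_*,\beta^*,\lambda,w$ and $|I|$ — hence on $\rho=\beta^*/\beta_*$ — but not on $h$ or $\alpha$.

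With these in hand the $H^1$ bound is immediate: coercivity, Galerkin orthogonality with $\chi=I_hp-p_h$, and continuity give
\[
\|p-p_h\|_{1,I^-\cup I^+}\le \frac{M}{\alpha_0}\,\|p-I_hp\|_{1,I^-\cup I^+},
\]
so I only need the full broken $H^1$ interpolation error. Its seminorm part is exactly Theorem \ref{thm3.1}. For the $L^2$ part I would split $p-I_hp=(p-\bar I_hp)+(\bar I_hp-I_hp)$: the first piece is $O(h^2\|p\|_{2,I^-\cup I^+})$ by \eqref{ClasicError}, while the second is supported on $[x_k,x_{k+1}]$ and vanishes at both $x_k$ and $x_{k+1}$ (since $\psi(x_k)=\psi(x_{k+1})=0$), so a Poincaré inequality on each of $[x_k,\alpha]$ and $[\alpha,x_{k+1}]$ combined with the seminorm bounds of Lemmas \ref{lemma3trm}–\ref{lemma3right1} again yields $O(h^2\|p\|_{2,I^-\cup I^+})$. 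Thus $\|p-I_hp\|_{1,I^-\cup I^+}\le Ch\|p\|_{2,I^-\cup I^+}$ and the $h\|p-p_h\|_{1}$ contribution to \eqref{main} follows.

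For the $L^2$ bound I would run Aubin–Nitsche. Since $a$ is symmetric, introduce the dual solution $z\in H^1_{\alpha,0}(I)$ of $a(v,z)=(p-p_h,v)$ for all $v$; because the form is unchanged, $z$ solves the same interface problem with data $p-p_h$ and therefore satisfies the natural jump conditions \eqref{jmpGood}, so Theorem \ref{thm3.1} applies to $z$. Assuming the $\alpha$-uniform regularity estimate $\|z\|_{2,I^-\cup I^+}\le C\|p-p_h\|_{0,I^-\cup I^+}$, Galerkin orthogonality gives
\[
\|p-p_h\|_{0,I^-\cup I^+}^2=a(p-p_h,z-I_hz)\le M\,\|p-p_h\|_{1,I^-\cup I^+}\,\|z-I_hz\|_{1,I^-\cup I^+},
\]
and bounding the first factor by the $H^1$ estimate just proved and the second by $Ch\|z\|_{2,I^-\cup I^+}$ produces $\|p-p_h\|_{0,I^-\cup I^+}\le Ch^2\|p\|_{2,I^-\cup I^+}$ after cancelling one power of $\|p-p_h\|_{0,I^-\cup I^+}$.

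The delicate point is not any single inequality but the bookkeeping of $\alpha$-independence, exactly as flagged before the statement: one must ensure that the Poincaré and trace constants, the constants $\alpha_0,M$, and above all the regularity constant for the dual problem do not degenerate as $\alpha$ approaches a mesh node. The interpolation side is already guaranteed by the $\alpha$-uniform constants of Theorem \ref{thm3.1} and Lemmas \ref{lemma3trm}–\ref{lemma3right1}; the main obstacle I anticipate is verifying that the dual interface problem enjoys $H^2_\alpha$-regularity with a bound independent of $\alpha$, which is what ultimately transfers the uniform interpolation estimates into a uniform $L^2$ rate.
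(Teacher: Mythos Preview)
Your proposal is correct and follows essentially the same route as the paper: Galerkin orthogonality plus coercivity/continuity (C\'ea) combined with the interpolation estimate of Theorem~\ref{thm3.1} for the broken $H^1$ bound, then an Aubin--Nitsche duality argument for the $L^2$ bound. The paper's own proof is considerably terser---it writes only the seminorm inequality $\beta_*|p-p_h|_{1,I}^2\le a(p-p_h,p-q_h)\le\beta^*|p-p_h|_{1,I}|p-q_h|_{1,I}$ and then dispatches $L^2$ with ``the usual duality argument''---so the extra care you take with the full-norm interpolation error, the jump term in continuity, and the $\alpha$-uniform dual regularity are details you supply that the paper leaves implicit.
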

\begin{proof}
Subtracting (\ref{ExactWeak}) from (\ref{FEMweak}), we have
\[
a(p-p_{h},q_{h}) = 0\quad\forall q_{h}\in\overline{S}_{h}.
\]
Then using the boundedness and coercivity properties of the bilinear form $a(\cdot,\cdot)$, we get
\begin{eqnarray*}
\beta_{*}|p-p_{h}|^2_{1,I} &\leq & a(p-p_{h},p-p_{h})  = a(p-p_{h},p-q_{h}) \\
&\leq& \beta^{*}|p-p_{h}|_{1,I}  |p-q_{h}|_{1,I},
\end{eqnarray*}
where $\beta^{*} = \sup_{x\in [a,b]}\beta(x)$  and $\beta_{*} = \inf_{x\in [a,b]}\beta(x)$.  Thus, by Cea's lemma and Theorem \ref{thm3.1}
\begin{eqnarray*}
|p-p_{h}|_{1,I}   &\leq& \frac{\beta^{*}}{\beta_{*}}\inf |p-q_{h}|_{1,I}  \\
&\leq& \frac{\beta^{*}}{\beta_{*}} |p-I_{h}p_{h}|_{1,I} \\
&\leq& Ch\|p\|_{2,I^{-}\cup I^{+}}.
\end{eqnarray*}
Then the usual duality argument leads to
\[
\|p-p_{h}\|_{0,I}  \leq  Ch^{2}\|p\|_{2,I^{-}\cup I^{+}}.
\]
\end{proof}

We note that the jump ratios  $\rho:=\frac{\beta^{*}}{\beta_{*}}$ are of moderate size for the wall model in the next section.

\begin{theorem}\label{2order} {\bf Second order accuracy at nodes.}
Suppose that $\beta\in C^1(a,\alpha)\cap C^1(\alpha,b)$ and $0\leq w\in C[a,b]$. Let $p$ be the exact solution  and $p_{h}$ be the approximate solution of (\ref{ExactWeak})  and (\ref{FEMweak}), respectively.
Then there exists a constant $C>0$ such that
\begin{equation}\label{2nd}
  |p(\xi)-p_h(\xi)|\leq Ch^2||p||_{2,I^-\cup I^+},\quad \xi=x_i,1\leq i\leq n-1.
  \end{equation}
where $C$ depends on certain norms of the Green's function at $\xi$.
\end{theorem}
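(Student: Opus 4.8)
The plan is to run the classical one-dimensional duality (Green's function) argument, handling the interface through the enriched interpolant $I_h$. For a fixed interior node $\xi=x_i$, introduce the Green's function $G_\xi\in H^1_{\alpha,0}(I)$ defined by $a(v,G_\xi)=v(\xi)$ for all $v\in H^1_{\alpha,0}(I)$. This is legitimate because $v\mapsto v(\xi)$ is a bounded functional on $H^1_{\alpha,0}(I)$ (the one-dimensional embedding $H^1\hookrightarrow C$), so Lax--Milgram produces a unique $G_\xi$; since $a$ is symmetric, $G_\xi$ solves the same interface problem as $p$ driven by a unit source $\delta_\xi$, and in particular it inherits the jump conditions \eqref{jmpGood} across $\alpha$. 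The hypotheses $\beta\in C^1(a,\alpha)\cap C^1(\alpha,b)$ and $w\in C[a,b]$ ensure that, away from $\xi$, $G_\xi$ is piecewise $H^2$ on the subintervals cut out by $\{a,\alpha,\xi,b\}$; I write $\|G_\xi\|_*$ for the resulting piecewise $H^2$-norm, which is finite and which will be absorbed into the final constant (the promised dependence on norms of the Green's function at $\xi$).

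Taking $v=p-p_h$ gives the representation $(p-p_h)(\xi)=a(p-p_h,G_\xi)$. Because the method is conforming, Galerkin orthogonality gives $a(p-p_h,q_h)=0$ for every $q_h\in\overline{S}_h$, so for any such $q_h$
\[
  (p-p_h)(\xi)=a(p-p_h,\,G_\xi-q_h).
\]
Splitting $a$ into its three constituents, I would bound the diffusion part by $\beta^*|p-p_h|_{1,I}\,|G_\xi-q_h|_{1,I}$, the reaction part by $\|w\|_{\infty}\|p-p_h\|_{0,I}\,\|G_\xi-q_h\|_{0,I}$, and the interface part by $\lambda^{-1}|[p-p_h]_\alpha|\,|[G_\xi-q_h]_\alpha|$, where the one-sided traces at $\alpha$ obey $|[u]_\alpha|\le C\|u\|_{1,I^-\cup I^+}$ by the 1D embedding. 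Theorem \ref{thm3.2} already supplies $|p-p_h|_{1,I}\le Ch\|p\|_{2,I^-\cup I^+}$ and $\|p-p_h\|_{0,I}\le Ch^2\|p\|_{2,I^-\cup I^+}$ (hence also $\|p-p_h\|_{1,I^-\cup I^+}\le Ch\|p\|_{2,I^-\cup I^+}$), so each of the three products is bounded by $Ch^2\|p\|_{2,I^-\cup I^+}\|G_\xi\|_*$ as soon as I can exhibit a $q_h\in\overline{S}_h$ with $\|G_\xi-q_h\|_{1,I^-\cup I^+}\le Ch\|G_\xi\|_*$.

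Producing that approximant is the crux, and the hard part is that $G_\xi\notin H^2_\alpha(I)$ (its derivative jumps at $\xi$), so Theorem \ref{thm3.1} does not apply verbatim. The saving observation is that $\xi=x_i$ is a \emph{mesh node}, so the kink of $G_\xi$ is resolved by the partition: on each element $G_\xi$ is $H^2$, the only exception being the interface element $[x_k,x_{k+1}]$, on whose two halves $G_\xi$ is again $H^2$ and across whose interior point $\alpha$ (note $\alpha\neq\xi$) it satisfies exactly \eqref{jmpGood}. I would therefore take $q_h$ to be the enriched interpolant built element by element -- the enriched $I_h$ prescription on $[x_k,x_{k+1}]$, using the well-defined one-sided $H^2$ traces of $G_\xi$ at $\alpha$, and plain $\pi_h G_\xi$ on every other element (where $\psi$ vanishes) -- and re-run the local analysis. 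On the interface element the estimates of Lemmas \ref{lemma3trm} and \ref{lemma3right1} apply to $G_\xi$ directly; on all remaining elements classical linear-interpolation theory gives the local $O(h)$ bound for the $H^2$ function $G_\xi$, the kink at $\xi$ sitting harmlessly at an element endpoint where $\pi_h$ reproduces the nodal value exactly. Squaring and summing over elements yields $\|G_\xi-q_h\|_{1,I^-\cup I^+}\le Ch\|G_\xi\|_*$, and the $L^2$ and trace versions follow the same way. Substituting back gives $|(p-p_h)(\xi)|\le Ch^2\|p\|_{2,I^-\cup I^+}\|G_\xi\|_*$, and absorbing $\|G_\xi\|_*$ into $C$ finishes the argument.
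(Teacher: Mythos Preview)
Your proof is correct and follows essentially the same Green's function/duality argument as the paper: introduce $G_\xi$, use Galerkin orthogonality to replace $G_\xi$ by $G_\xi-q_h$, then exploit that $\xi=x_i$ is a mesh node so the piecewise-$H^2$ Green's function (which satisfies \eqref{jmpGood}) admits an $O(h)$ enriched interpolant via the local estimates of Lemmas~\ref{lemma3trm} and~\ref{lemma3right1}. The paper's write-up is terser---it bounds $a(g-I_h g,\,e)$ in one line via continuity of $a$ rather than splitting into the diffusion, reaction, and interface parts---but the substance is identical.
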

\begin{proof}
Let $G(x,\xi),\xi\ne \alpha$ be the Green's function satisfying
\[ a(G(\cdot,\xi),v)=<\delta(x-\xi),v>,\quad v\in H^1_{0,\alpha}(a,b)\]
whose existence is guaranteed by the Lax-Milgram theorem, since in 1D point evaluation is a bounded operator.
Then from \cite{ATTChou2021,stakgold2011green}, we know without loss of generality  that for $\xi< \alpha$, $g=G(\cdot,\xi)\in H^2(\Omega)$, for $\Omega=(a,\xi),(\xi,x_k),(x_k,\alpha),(\alpha,x_{k+1}),(x_{k+1},b)$. Similar regularity holds if $\xi$ lies elsewhere. Since $g$ satisfies \eqref{jmpGood} we can use
{\it the local estimates} in Section 3 and conclude that
that there exists $ I_h g\in \bar S_h$ such that

\begin{equation}\label{gr} |g- I_h g|_{1,\Omega}\le C h ||g^{\prime\prime}||_{0,\Omega}
\end{equation}
for all the $\Omega$'s listed above.
Now
\[   e(x_i)=a(g, e)=a(g- I_h g,e)\] implies that
\begin{align*}
 |e(x_i)|&\le Ch||g||_{2,*}\,h||p||_{2,I^-\cup I^+}\\
  &\leq Ch^2||g||_{2,*}||p||_{2,I^-\cup I^+}
  \end{align*}
   where $||g||^2_{2,*}:=\sum||g||^2_{2,\Omega},$ the summation being over all the $\Omega$'s listed above.
\end{proof}

\section{Numerical Examples}
In this section, we test our method using the multi-layer porous wall model for the drug-eluting stents \cite{Pontrelli} that has been studied using the immersed finite element methods \cite{Wang, Zhang1, Zhang2,  Zhang}. In this one-dimensional wall model of layers, a drug is injected or released at an interface and gradually diffuses rightward. The concentration is thus discontinuous across the injection interface and continuous in the other layers. At all interface points, a zero-flux condition is imposed.
We run tests on both enriched linear and quadratic finite element spaces.

\subsection{Enriched Linear Elements}
In this subsection we test the efficiency of our method on three problems. In Problem 1, we place only one interface point to model the layer where the drug is delivered.  In Problem 2, we place two interfaces to model the layers where the concentration has continuously spread. Finally, in Problem 3 we combine the previous two cases and place three interface points to simulate the full wall model. In all three problems, we confirm in Table \ref{fig:P1}-Table \ref{fig:P3} the optimal order convergence in the broken $H^1$ and the $L^2$ norms. In addition, the nodal errors are shown to be second order in all these tables as well. We are interested in the behavior
of the condition numbers of the associated stiffness matrices. Following the viewpoint of the SGFEM \cite{Babuska1, Babuska2, Deng}, we compare the condition numbers in Problem 1 and Problem 3 (discontinuous solutions) with those in Problem 2 (continuous solution) \cite{ATTChou2021} that are displayed in Table \ref{fig:P2}. We can see that the condition numbers in Table \ref{fig:P1} and Table \ref{fig:P3} are comparable in the order of magnitude with those in Table \ref{fig:P2} for the same mesh sizes.

{\bf Problem 1.  Discontinuous Solution.} Consider the two-point boundary value problem with one interface point $\alpha_0=1/9$
\begin{equation}\label{Pro1}
\frac{\partial}{\partial x}\left(-D\frac{\partial u}{\partial x} + 2\delta u \right) + \gamma u = f \quad \text{ in }(0,1)
\end{equation}
subject to the the no-flux Neumann condition at $x=0$ and the Dirichlet condition at $x=0$:
\[
D_{0}u'(0) = 0, \quad u(1) = \frac{1}{3}.
\]
Here the drug reaction coefficient $\gamma=0$, and the drug diffusivity $D$ and the characteristic convection parameter $\delta$ are piecewise continuous with respect to  $[0,1/9]$ and $[1/9,1]$:
\[D(x) =
\begin{cases}
D_{0} = 1 & x\in[0,1/9] \\
D_1 = \frac{18(n-1)}{10n} & x\in[1/9,1];
\end{cases}\\
\]
\[
\delta(x)=
\begin{cases}
\delta_{0} = 0 & x\in[0,1/9] \\
 \delta = 0.5(9nD_1-8.1(n-1))& x\in[1/9,1].
\end{cases}
\]
Furthermore, at the interface point $\alpha_{0}$, one of the jump conditions is implicit
\begin{equation}\label{disContJump}
\begin{cases}
[u]_{\alpha_{0}} = \lambda D_{0}u'(\alpha_{0}), \\
-D_{0}u'(\alpha_{0})= -D_{1}u'(\alpha_{0}^{+}) + 2\delta_{1}u(\alpha_{0}^{+})
\end{cases}
\end{equation}
where $\lambda = 1/81(n-1)D_{0}$.
The exact solution
\[u(x) =
\begin{cases}
u_{0} = x^{n-1}/30, &  x\in[0,1/9],\\
 u_1 = x^n/3,& x\in[1/9,1]. \\
\end{cases}
\]
We test the effectiveness of the method with $n=4$ and with the enrichment function in \eqref{eqn:lin1}. The run results are displayed in Table \ref{fig:P1}.

\begin{table}[h]
\centering
    \begin{tabular}{|c||c|c|c|c|}
    \hline
    Problem 1 &  $L_{2}$ error & $H^{1}$ error & nodal error & condition number  \\
    \hline\hline
    $h=1/8$ & 1.43943e-03 &6.59920e-02 & 4.85121e-03 &  0.137850e+06 \\
    \hline
    $h=1/16$ & 3.40683e-04 & 3.24574e-02  & 1.01654e-03 & 0.143171e+06\\
    \hline
    $h=1/32$& 8.39493e-05 & 1.61603e-02 &  2.46808e-04 &  0.271847e+06 \\
    \hline
 $h=1/64$  & 2.09052e-05 &8.07152e-03  & 6.12768e-04 & 0.346975e+06  \\
 \hline
  $h=1/128$   & 5.22499e-06 & 4.03471e-03  & 1.53121e-04  &  0.125160e+07  \\
  \hline
   $h=1/256$ & 1.30868e-06 & 2.01723-03 &   3.82653e-06 & 0.839319e+07 \\
   \hline
    $h=1/512$ & 3.26874e-07 & 1.00859e-03  & 9.56539e-07 & 0.835384e+08  \\
    \hline
    order & $\approx 2$ & $\approx 1$  &  $\approx 2$ &  -  \\
    \hline
    \end{tabular}
    \caption{$L^{2}$-, broken $H^{1}$-, nodal errors, and condition numbers with discontinuous jump condition}
     \label{fig:P1}
\end{table}



{\bf Problem 2. Continuous Solution.} Consider the two-point boundary value problem with two interface points $\alpha_1=1/3,\alpha_2= 2/3$
\begin{equation}\label{P2}
\frac{\partial}{\partial x}\left(-D\frac{\partial u}{\partial x} + 2\delta u \right) + \gamma u = f \quad x\in(0,1)
\end{equation}
with the boundary conditions
\[
D_{0}u'(0) = 0 \quad u(1) = 0.
\]
Here with $n=4$
\[D(x) =
\begin{cases}
D_1 = \frac{18(n-1)}{10n} & x\in[0,1/3] \\
D_2 = \frac{6nD_1 - 2\delta}{3(n+1)}  & x\in[1/3,2/3] \\
D_3 = \frac{8\delta_2 - 3(n+1)D_2}{3(n+5)}  & x\in[2/3,1];
\end{cases}
\]

\[\delta(x)=
\begin{cases}
 \delta = 0.5(9nD_1-8.1(n-1)) & x\in[0,1/3]\\
 \delta_2 = 0.5(3(n+1)D_2 - 3nD_1 + 2\delta)  & x\in[1/3,2/3] \\
\delta_3 = 0.25(3(n-1)D_3 - 3(n+1)D_2 + 4\delta_2) & x\in[2/3,1],
\end{cases}
\]
 and $\gamma = 10, 1, 0.1$ in respective subintervals.
 At the interface points $\alpha_{i}$ for $i=1,2$, the solution $u$ is continuous and
\begin{equation}
\begin{cases}\label{contJump}
[u]_{\alpha_{i}} = 0, \\
-D_{i}u'(\alpha_{i}^{-}) + 2\delta_{i}u(\alpha_{i}^{-}) = -D_{i+1}u'(\alpha_{i}^{+}) + 2\delta_{i+1}u(\alpha_{i}^{+}).
\end{cases}
\end{equation}
The exact solution is

\[u(x) =
\begin{cases}
 u_1 = x^n/3& x\in[0,1/3] \\
u_2 = x^{n+1} & x\in[1/3,2/3] \\
u_3 = 3(1-x)x^{n+1}& x\in[2/3,1].
\end{cases}
\]

The enrichment function $\psi$ is well-known \cite{Babuska1, Deng, ATTChou2021}:
\begin{equation}
\psi(x) =
\begin{cases}
0   &  x\in [0,x_{k}]\\
\dfrac{(x_{k+1}-\alpha)(x_{k}-x)}{x_{k+1}-x_{k}} &  x\in [x_{k},\alpha]  \label{eqn:lin}\\
\dfrac{(\alpha-x_{k})(x-x_{k+1})}{x_{k+1}-x_{k}} &  x\in [\alpha,x_{k+1}]\\
0   &  x\in [x_{k+1},1].
\end{cases}
\end{equation}
The run results are displayed in Table \ref{fig:P2}.

\begin{table}[h]
\centering
    \begin{tabular}{|c||c|c|c|c|}
    \hline
    Problem 2 &  $L_{2}$ error & $H^{1}$ error  & nodal error  & condition number \\
    \hline\hline
    $h=1/8$ & 8.58406e-03 &2.91716e-01 &  2.07071e-02 & 0.127626e+05\\
    \hline
    $h=1/16$ & 2.11391e-03 & 1.46341e-01 & 4.56597e-03 & 0.109720e+06  \\
    \hline
    $h=1/32$& 5.30238e-04 & 7.35572e-02 & 1.11087-03 & 0.304583e+06  \\
    \hline
 $h=1/64$  & 1.32359e-04 &3.67855e-02  & 2.76462e-04 & 0.175135e+07 \\
 \hline
  $h=1/128$   & 3.31638e-05 & 1.84188e-02 & 6.91011e-05  & 0.511390e+07 \\
  \hline
   $h=1/256$ & 8.29035e-06 & 9.21011e-03 & 1.72680e-05 & 0.277080e+08 \\
   \hline
    $h=1/512$ & 2.07405e-06 & 4.60678e-03 & 431719e-06 & 0.825348e+08 \\
    \hline
    order & $\approx 2$ & $\approx 1$ &  $\approx 2$  &  - \\
    \hline
    \end{tabular}
    \caption{$L^{2}$-, broken $H^{1}$-, nodal errors, and condition numbers  with homogeneous jump conditions}
     \label{fig:P2}
\end{table}


{\bf Problem 3. Implicit and Explicit Conditions Both Present.} In this problem, we combine the interfaces of the last two problems. The interface points are $\alpha_{0} = 1/9$,
$\alpha_{1} = 1/3$ and $\alpha_{2} = 2/3$.
The two-point boundary value problem is
\begin{equation}\label{P3}
\frac{\partial}{\partial x}\left(-D\frac{\partial u}{\partial x} + 2\delta u \right) + \gamma u = f \quad x\in(0,1)
\end{equation}
subject to the boundary conditions
\[
D_{0}u'(0) = 0 \qquad u(1) = 0.
\]
The coefficients are defined as follows:

\[
D(x) =
\begin{cases}
D_{0} = 1 & x\in [0,1/9] \\
D_1 = \frac{18(n-1)}{10n}, & x\in [1/9,1/3] \\
D_2 = \frac{6nD_1 - 2\delta}{3(n+1)}  & x\in [1/3,2/3] \\
D_3 = \frac{8\delta_2 - 3(n+1)D_2}{3(n+5)}  & x\in [2/3,1];
\end{cases}
\]

\[\delta(x)=
\begin{cases}
\delta_{0} = 0 & x\in[0,1/9] \\
 \delta = 0.5(9nD_1-8.1(n-1)) & x\in[1/9,1/3] \\
 \delta_2 = 0.5(3(n+1)D_2 - 3nD_1 + 2\delta)  & x\in[1/3,2/3] \\
\delta_3 = 0.25(3(n-1)D_3 - 3(n+1)D_2 + 4\delta_2) & x\in [2/3,1];
\end{cases}
\]
$n=4$ and $\gamma =0, 10, 1, 0.1$ in respective subintervals.
The exact solution is

\[u(x) =
\begin{cases}
u_{0} = x^{n-1}/30 & [0,1/9]\\
 u_1 = x^n/3& [1/9,1/3] \\
u_2 = x^{n+1} & [1/3,2/3] \\
u_3 = 3(1-x)x^{n+1}& [2/3,1]
\end{cases}
\]
and satisfies the jump condition (\ref{disContJump}) at $1/9$ and   (\ref{contJump}) at the interface points $1/3$ and $2/3$.
For the discontinuous interface point $1/9$ we use the enrichment function defined in \eqref{eqn:lin1} and for the continuous interface points $1/3$ and $2/3$ we use the enrichment function in (\ref{eqn:lin}). The run results are displayed in Table \ref{fig:P3}.

\begin{table}[h]
\centering
    \begin{tabular}{|c||c|c|c|c|}
    \hline
    Problem 3 &  $L_{2}$ error & $H^{1}$ error & nodal error & condition number  \\
    \hline\hline
    $h=1/8$ & 8.58383e-03 &2.91715e-01 & 2.07048e-02 &  0.516955e+06 \\
    \hline
    $h=1/16$ & 2.11387e-03 & 1.46341e-01  & 4.56552e-03 & 0.207890e+06\\
    \hline
    $h=1/32$& 5.30234e-04 & 7.35577e-02 & 1.11075e-03 & 0.422880e+06 \\
    \hline
 $h=1/64$  & 1.32358e-04 &3.67868e-02  & 2.76434e-04 & 0.175140e+07  \\
 \hline
  $h=1/128$   & 3.31640e-05 & 1.84202e-02  & 6.90932e-05  &  0.511405e+07  \\
  \hline
   $h=1/256$ & 8.29083e-06 & 9.21222e-03 &  1.72659e-05 & 0.277086e+08 \\
   \hline
    $h=1/512$ & 2.07413e-06 & 4.61030e-03  & 5.16955e-06 & 0.129948e+09  \\
    \hline
    order & $\approx 2$ & $\approx 1$  &  $\approx 2$ &  -  \\
    \hline
    \end{tabular}
    \caption{$L^{2}$-, broken $H^{1}$-, nodal errors, and condition numbers with continuous and discontinuous  jump conditions}
     \label{fig:P3}
\end{table}

\subsection{Enriched Quadratic Elements}
In Problems 4 and 6, We test our method on the conforming $\mathbb P_2$ elements enriched by the enrichment function in \eqref{eqn:lin1}. All the conclusions in
subsection 4.1 hold, including the statements of optimal order convergence and condition numbers. The purpose of this section is to see what to expect when going on to higher order elements. The theory will be developed in another paper.

{\bf Problem 4.  Discontinuous Solution.} The BVP setting is the same as in Problem 1, Eq. \eqref{P1}, of the previous section.

We test the effectiveness of the method with $n=4$ and with the enrichment function in \eqref{eqn:lin1}. The run results are displayed in Table \ref{fig:P4}.

\begin{table}[h]
\centering
    \begin{tabular}{|c||c|c|c|}
    \hline
    Problem 4 &  $L_{2}$ error & $H^{1}$ error &  condition number  \\
    \hline\hline
    $h=1/8$ & 5.26785e-05 &2.78963e-03  &  0.173135e+07 \\
    \hline
    $h=1/16$ & 6.50740e-06 & 6.78494e-04  & 0.137419e+07\\
    \hline
    $h=1/32$& 8.10800e-07 & 1.68376e-04  &  0.435007e+07 \\
    \hline
 $h=1/64$  & 1.01260e-07 & 4.20145e-05   & 0.632200e+07  \\
 \hline
  $h=1/128$   & 1.26572e-08 & 1.05011e-05    &  0.126631e+08  \\
  \hline
   $h=1/256$ & 1.58255e-09 & 2.62599-06 &    0.690704e+08 \\
   \hline
    $h=1/512$ & 1.97777e-10 & 6.56287e-07   & 0.803646e+09  \\
    \hline
    order & $\approx 3$ & $\approx 2$   &  -  \\
    \hline
    \end{tabular}
    \caption{$L^{2}$-, broken $H^{1}$-, nodal errors and condition numbers with discontinuous jump conditions}
     \label{fig:P4}
\end{table}



{\bf Problem 5. Continuous Solution.} The BVP setting is exactly the same as in Problem 2, Eq. \eqref{P2}, of the previous section.
The enrichment function $\psi$ is \eqref{eqn:lin}. The run results are displayed in Table \ref{fig:P5}.

\begin{table}[h]
\centering
    \begin{tabular}{|c||c|c|c|c|}
    \hline
    Problem 2 &  $L_{2}$ error & $H^{1}$ error    & condition number \\
    \hline\hline
    $h=1/8$ & 6.27646e-04 &3.33102e-02  & 0.103879e+07\\
    \hline
    $h=1/16$ & 8.13417e-05 & 8.48195e-03 & 0.628185e+07  \\
    \hline
    $h=1/32$& 1.02475e-05 & 2.12831e-03 & 0.168823e+08  \\
    \hline
 $h=1/64$  & 1.28511e-06 &5.33221e-04   & 0.100139e+09 \\
 \hline
  $h=1/128$   & 1.60820e-07 & 1.33419e-04   & 0.270291e+09 \\
  \hline
   $h=1/256$ & 2.01127e-08 & 3.33693e-05  & 0.159784e+10 \\
   \hline
    $h=1/512$ & 2.51467e-09 & 8.34410e-06  & 0.432363e+10 \\
    \hline
    order & $\approx 3$ & $\approx 2$ &    - \\
    \hline
    \end{tabular}
    \caption{$L^{2}$-, broken $H^{1}$-, nodal errors and condition numbers  with homogeneous jump conditions}
     \label{fig:P5}
\end{table}


{\bf Problem 6. Implicit and Explicit Conditions Both Present.} The BVP setting is exactly the same as in Problem 3, Eq. \eqref{P3}, of the previous section.
The run results are displayed in Table \ref{P6}.
\begin{table}[h]
\centering
    \begin{tabular}{|c||c|c|c|}
    \hline
    Problem 6 &  $L_{2}$ error & $H^{1}$ error  & condition number  \\
    \hline\hline
    $h=1/8$ & 6.27649e-04 &3.33100e-02  &  0.252824e+07 \\
    \hline
    $h=1/16$ & 8.13414e-05 & 8.48190e-03   & 0.628277e+07\\
    \hline
    $h=1/32$& 1.02475e-05 & 2.12830e-03  & 0.168830e+08 \\
    \hline
 $h=1/64$  & 1.28510e-06 & 5.33218e-04  & 0.100141e+09  \\
 \hline
  $h=1/128$   & 1.60819e-07 & 1.33418e-04   &  0.270294e+09  \\
  \hline
   $h=1/256$ & 2.01127e-08 & 3.33692e-05  & 0.159785e+10 \\
   \hline
    $h=1/512$ & 2.51466e-09 & 8.34407e-06   & 0.125011e+11  \\
    \hline
    order & $\approx 3$ & $\approx 2$    &  -  \\
    \hline
    \end{tabular}
    \caption{$L^{2}$-, broken $H^{1}$-, nodal errors and condition numbers with continuous and discontinuous  jump conditions}
     \label{P6}
\end{table}

\newpage
\section{Concluding Remarks.}
Based on optimal order analysis, we derived a family of enrichment functions for the conforming $\mathbb P_1$ finite element, and the resulting enriched method
can approximate discontinuous solutions in optimal order in the broken $H^1$ and $L^2$ norms. Encouraged by the preliminary numerical results for the quadratic element
in subsection 4.2, we hope to extend the same approach to all $\mathbb P_i, i\geq 2$.

 Extension of our approach to higher dimensions is highly desirable. The tools we used in our approach in one dimension include Taylor's expansion,
extension operators in Sobolev spaces, and balancing the lower order large terms resulting from extension operators with the higher-order terms in the multipliers with the
enrichment function. All these have their counterparts in higher dimensions. The new ingredient in higher dimensions will include some contamination from the added
geometric complexity near the interface. An analogous 1D parameter called interface deviation $\epsilon$ was introduced in \cite{ATTChou2021} to mimic the geometric complexity
 (The enrichment function breaks at $\alpha-\epsilon$ instead of the interface point $\alpha$). We wish to further investigate the effect of this parameter on our present method.
\bibliographystyle{siam}
\bibliography{CHOU_ATTAN}

\begin{thebibliography}{10}

\bibitem{Ammari}
{\sc H.~Ammari, J.~Garnier, H.~Kang, M.~Lim, and S.~Yu}, {\em Generalized
  polarization tensors for shape description}, Numerische Mathematik, 126
  (2014), pp.~199--224.

\bibitem{ATTChou2021}
{\sc C.~Attanayake and S.-H. Chou}, {\em Superconvergence and flux recovery for
  an enriched finite element method.}, International Journal of Numerical
  Analysis \& Modeling, 18 (2021), pp.~656--673.

\bibitem{Babuska1}
{\sc I.~Babu{\v{s}}ka and U.~Banerjee}, {\em Stable generalized finite element
  method (sgfem)}, Computer methods in applied mechanics and engineering, 201
  (2012), pp.~91--111.

\bibitem{Babuska2}
{\sc I.~Babu{\v{s}}ka, U.~Banerjee, and K.~Kergrene}, {\em Strongly stable
  generalized finite element method: Application to interface problems},
  Computer Methods in Applied Mechanics and Engineering, 327 (2017),
  pp.~58--92.

\bibitem{babuvska2004generalized}
{\sc I.~Babu{\v{s}}ka, U.~Banerjee, and J.~E. Osborn}, {\em Generalized finite
  element methods—main ideas, results and perspective}, International Journal
  of Computational Methods, 1 (2004), pp.~67--103.

\bibitem{Belyt}
{\sc T.~Belytschko and T.~Black}, {\em Elastic crack growth in finite elements
  with minimal remeshing}, International journal for numerical methods in
  engineering, 45 (1999), pp.~601--620.

\bibitem{Bordas}
{\sc S.~P. Bordas, E.~Burman, M.~G. Larson, and M.~A. Olshanskii}, {\em
  Geometrically unfitted finite element methods and applications}, Lecture
  Notes in Computational Science and Engineering, 121 (2017), pp.~6--8.

\bibitem{Chou}
{\sc S.-H. Chou}, {\em An immersed linear finite element method with interface
  flux capturing recovery}, Discrete \& Continuous Dynamical Systems-B, 17
  (2012), p.~2343.

\bibitem{Choun}
\leavevmode\vrule height 2pt depth -1.6pt width 23pt, {\em How to create 1d
  enrichment functions}, Informal Note,  (2021).

\bibitem{Deng}
{\sc Q.~Deng and V.~Calo}, {\em Higher order stable generalized finite element
  method for the elliptic eigenvalue and source problems with an interface in
  1d}, Journal of Computational and Applied Mathematics, 368 (2020), p.~112558.

\bibitem{Ern1}
{\sc A.~Ern and J.-L. Guermond}, {\em Finite elements I: Approximation and
  interpolation}, vol.~72, Springer Nature, 2021.

\bibitem{Fries}
{\sc T.-P. Fries and T.~Belytschko}, {\em The extended/generalized finite
  element method: an overview of the method and its applications},
  International journal for numerical methods in engineering, 84 (2010),
  pp.~253--304.

\bibitem{guo2019group}
{\sc R.~Guo and T.~Lin}, {\em A group of immersed finite-element spaces for
  elliptic interface problems}, IMA Journal of Numerical Analysis, 39 (2019),
  pp.~482--511.

\bibitem{guo2018nonconforming}
{\sc R.~Guo, T.~Lin, and X.~Zhang}, {\em Nonconforming immersed finite element
  spaces for elliptic interface problems}, Computers \& Mathematics with
  Applications, 75 (2018), pp.~2002--2016.

\bibitem{Hahn}
{\sc D.~W. Hahn and M.~N. {\"O}zisik}, {\em Heat conduction}, John Wiley \&
  Sons, 2012.

\bibitem{Hansbo}
{\sc A.~Hansbo and P.~Hansbo}, {\em An unfitted finite element method, based on
  nitsche’s method, for elliptic interface problems}, Computer methods in
  applied mechanics and engineering, 191 (2002), pp.~5537--5552.

\bibitem{he2013immersed}
{\sc X.~He, T.~Lin, Y.~Lin, and X.~Zhang}, {\em Immersed finite element methods
  for parabolic equations with moving interface}, Numerical Methods for Partial
  Differential Equations, 29 (2013), pp.~619--646.

\bibitem{jo2019recent}
{\sc G.~Jo and D.~Y. Kwak}, {\em Recent development of immersed {FEM} for
  elliptic and elastic interface problems}, Journal of the Korean Society for
  Industrial and Applied Mathematics, 23 (2019), pp.~65--92.

\bibitem{Kim}
{\sc J.~Ka{\v{c}}ur and R.~Van~Keer}, {\em A nondestructive evaluation method
  for concrete viods: frequency differential electrical impedance scanning},
  SIAM Journal on Applied Mathematics, 69 (2009), pp.~1759--1771.

\bibitem{Kruit1}
{\sc P.~A. Krutitskii}, {\em The jump problem for the {H}elmholtz equation and
  singularities at the edges}, Applied Mathematics Letters, 13 (2000),
  pp.~71--76.

\bibitem{LI}
{\sc Z.~Li}, {\em The immersed interface method using a finite element
  formulation}, Applied Numerical Mathematics, 27 (1998), pp.~253--267.

\bibitem{LI:2006}
{\sc Z.~Li and K.~Ito}, {\em The immersed interface method: numerical solutions
  of PDEs involving interfaces and irregular domains}, SIAM, 2006.

\bibitem{Li2003}
{\sc Z.~Li, T.~Lin, and X.~Wu}, {\em New cartesian grid methods for interface
  problems using the finite element formulation}, Numerische Mathematik, 96
  (2003), pp.~61--98.

\bibitem{Moes}
{\sc N.~Mo{\"e}s, J.~Dolbow, and T.~Belytschko}, {\em A finite element method
  for crack growth without remeshing}, International journal for numerical
  methods in engineering, 46 (1999), pp.~131--150.

\bibitem{Pontrelli}
{\sc G.~Pontrelli and F.~de~Monte}, {\em A multi-layer wall model for coronary
  drug-eluting stents}, Int. J. Heat Mass Transf., 50 (2007), pp.~3658--3889.

\bibitem{stakgold2011green}
{\sc I.~Stakgold and M.~J. Holst}, {\em Green's functions and boundary value
  problems}, vol.~99, John Wiley \& Sons, 2011.

\bibitem{Wang}
{\sc H.~Wang, J.~Chen, P.~Sun, and F.~Qin}, {\em A conforming enriched finite
  element method for elliptic interface problems}, Applied Numerical
  Mathematics, 127 (2018), pp.~1--17.

\bibitem{Zhang1}
{\sc H.~Zhang, X.~Feng, and K.~Wang}, {\em Long time error estimates of{ IFE}
  methods for the unsteady multi-layer porous wall model}, Applied Numerical
  Mathematics, 156 (2020), pp.~303--321.

\bibitem{Zhang2}
{\sc H.~Zhang, T.~Lin, and Y.~Lin}, {\em Linear and quadratic immersed finite
  element methods for the multi-layer porous wall model for coronary
  drug-eluting stents.}, International Journal of Numerical Analysis \&
  Modeling, 15 (2018).

\bibitem{Zhang}
{\sc H.~Zhang and K.~Wang}, {\em Long-time stability and asymptotic analysis of
  the {IFE} method for the multilayer porous wall model}, Numerical Methods for
  Partial Differential Equations, 34 (2018), pp.~419--441.

\bibitem{zhang2021generalized}
{\sc J.~Zhang, Q.~Deng, and X.~Li}, {\em A generalized isogeometric analysis of
  elliptic eigenvalue and source problems with an interface}, Journal of
  Computational and Applied Mathematics,  (2021), p.~114053.

\end{thebibliography}

\end{document}